\font\Afont=cmr12 scaled 1000
\font\Bfont=cmr12 scaled 1200
\font\Cfont=cmr12 scaled 1440
\def\R{{\mathbb R}}
\def\N{{\mathbb N}}
\def\Z{{\mathbb Z}}
\def\C{{\mathbb C}}
\def\phii{\varphi}
\newcommand{\esssup}{\operatornamewithlimits{ess\,sup}}
\newtheorem{thm}{Theorem}[section]
\newtheorem{lem}[thm]{Lemma}
\theoremstyle{remark}
\newtheorem{rem}[thm]{Remark}
\theoremstyle{definition}
\newtheorem{dfn}[thm]{Definition}
\numberwithin{equation}{section}
\newlength{\fixboxwidth}
\newcommand{\fix}[1]{\marginpar{\fbox{\parbox{\fixboxwidth}{\raggedright\tiny #1}}}}
\begin{document}
\pagestyle{empty}
\
\vskip1cm
\centerline {\Cfont On sharp embeddings of Besov and Triebel-Lizorkin spaces}
\vskip.5cm
\centerline {\Cfont in the subcritical case}
\vskip2cm
\centerline {\Bfont Jan Vyb\'\i ral}
\vskip.3cm
\centerline {\Afont Mathematisches Institut, Universit\"at Jena}
\centerline {\Afont Ernst-Abbe-Platz 2, 07740 Jena, Germany}
\centerline {email:\ {\tt vybiral@mathematik.uni-jena.de}}
\vskip.5cm
\centerline {\bf \today}
\vskip.5cm
\begin{abstract}
We discuss the growth envelopes of Fourier-analytically defined Besov and 
Triebel-Lizorkin spaces $B^s_{p,q}(\R^n)$
and $F^s_{p,q}(\R^n)$ for $s=\sigma_p=n\max(\frac 1p-1,0)$. These results may be also reformulated
as optimal embeddings into the scale of Lorentz spaces $L_{p,q}(\R^n)$.
We close several open problems outlined already by H.~Triebel in \cite{T-SF} 
and explicitly formulated by D.~D.~Haroske in \cite{H}.
\end{abstract}

{\bf AMS Classification: }{46E35, 46E30}

{\bf Keywords and phrases:} {Besov spaces, Triebel-Lizorkin spaces, rearrangement invariant spaces,
Lorentz spaces, growth envelopes}

\newpage
\pagestyle{fancy}
\section{Introduction and main results}
We denote by $B^s_{p,q}(\R^n)$ and $F^s_{p,q}(\R^n)$ the Fourier-analytic Besov and Triebel-Lizorkin spaces
(see Definition \ref{defsp} for details). The embeddings of these function spaces (and other spaces
of smooth functions) play an important role in functional analysis. If $s>\frac {n}{p}$, then these spaces
are continuously embedded into $C(\R^n)$, the space of all complex-valued bounded and uniformly
continuous functions on $\R^n$ normed in the usual way. If $s<\frac np$ then these function spaces contain 
also unbounded functions. This statement holds true also for $s=\frac np$ under some additional restrictions
on the parameters $p$ and $q$. We refer to \cite[Theorem 3.3.1]{SiTr} for a complete overview.

To describe the singularities of these unbounded elements, we use the technique of the non-increasing
rearrangement.

\begin{dfn}\label{defR}
Let $\mu$ be the Lebesgue measure in $\R^n$.
If $h$ is a measurable function on $\R^n$, we define the non-increasing rearrangement of
$h$ through
\begin{equation}\label{eq':2.1}
h^*(t)=\sup \{\lambda>0: \mu\{x\in\R^n: |h(x)|>\lambda\}>t \},\qquad t\in (0,\infty).
\end{equation}
\end{dfn}

To be able to apply this procedure to elements of $A^s_{p,q}(\R^n)$ (with $A$ standing for $B$ or $F$), 
we have to know whether all the distributions of $A^s_{p,q}(\R^n)$ may be interpreted as measurable functions.
This is the case if, and only if, $A^s_{p,q}(\R^n)\hookrightarrow L_1^{\rm loc}(\R^n)$, the space of 
all measurable, locally-integrable functions on $\R^n$. A complete treatment of this question may
be found in \cite[Theorem 3.3.2]{SiTr}:
\begin{equation}\label{B1}
B^s_{p,q}(\R^n)\hookrightarrow L_1^{\rm loc}(\R^n) \Leftrightarrow
\begin{cases}\text{either}&s>\sigma_p:=n\max(\frac 1p-1,0),\\
\text{or}\quad &s=\sigma_p, 1<p\le\infty, 0<q\le\min(p,2),\\
\text{or}\quad &s=\sigma_p, 0<p\le 1, 0<q\le1
\end{cases}
\end{equation}
and
\begin{equation}\label{F1}
F^s_{p,q}(\R^n)\hookrightarrow L_1^{\rm loc}(\R^n) \Leftrightarrow
\begin{cases}\text{either}&s>\sigma_p,\\
\text{or}\quad &s=\sigma_p, 1\le p<\infty, 0<q\le2,\\
\text{or}\quad &s=\sigma_p, 0<p<1, 0<q\le\infty.
\end{cases}
\end{equation}
Let us assume, that a function space $X$ is embedded into $L_1^{\rm loc}(\R^n)$. 
The {\it growth envelope function} of $X$ was defined by D.~D.~Haroske and H.~Triebel (see \cite{H'}, \cite{H},
\cite{T-SF} and references given there) by
\begin{equation*}
{\mathcal E}^X_G(t):=\sup_{||f|X||\le 1}f^*(t),\quad 0<t<1.
\end{equation*}
If ${\mathcal E}^X_G(t)\approx t^{-\alpha}$ for $0<t<1$ and some $\alpha>0$, then we define the \fix{OK?}
{\it growth envelope index} $u_X$ as the infimum of all numbers $v$, $0<v\le\infty$, such that
\begin{equation}\label{eq:index}
\left( \int_{0}^\epsilon \left[\frac{f^*(t)}{{\mathcal E}^X_G(t)}\right]^{v}\frac{dt}{t}\right)^{1/v}
\le c\, ||f|X||
\end{equation}
(with the usual modification for $v=\infty$) holds for some $\epsilon>0, c>0$ and all $f\in X.$

The pair ${\mathfrak E_G}(X)=({\mathcal E_G^X},u_X)$ is called {\it growth envelope} for the function space $X$.

In the case $\sigma_p<s<\frac np$, the growth envelopes of $A^s_{p,q}(\R^n)$ are known,
cf. \cite[Theorem 15.2]{T-SF} and \cite[Theorem 8.1]{H}. If $s=\sigma_p$ and \eqref{B1} or \eqref{F1} is fulfilled in the $B$ or $F$ case,
respectively, then the known information is not complete, 
cf. \cite[Rem. 12.5, 15.1]{T-SF} and \cite[Prop. 8.12, 8.14 and Rem. 8.15]{H}:

\begin{thm} \label{thm1.1}
(i) Let $1< p<\infty$ and $0<q\le \min(p,2)$. Then 
\begin{equation*}
 \mathfrak E_G(B^0_{p,q})=(t^{-\frac{1}{p}},u)\qquad\text{with}\quad q\le u\le p.
\end{equation*}
(ii) Let $1\le p<\infty$ and $0<q\le 2$. Then 
\begin{equation*}
 \mathfrak E_G(F^0_{p,q})=(t^{-\frac{1}{p}},p).
\end{equation*}
(iii) Let $0<p\le 1$ and $0<q\le 1$. Then 
\begin{equation*}
 \mathfrak E_G(B^{\sigma_p}_{p,q})=(t^{-1},u)\qquad\text{with}\quad q\le u\le 1.
\end{equation*}
(iv) Let $0<p<1$ and $0<q\le\infty$. Then
\begin{equation*}
 \mathfrak E_G(F^{\sigma_p}_{p,q})=(t^{-1},u)\qquad\text{with}\quad p\le u\le 1.
\end{equation*}
\end{thm}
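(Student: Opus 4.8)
The plan is to read the integral inequality \eqref{eq:index} as a local Lorentz-space quasi-norm. Since $\mathcal{E}^X_G(t)\approx t^{-1/p}$ in parts (i),(ii) and $\mathcal{E}^X_G(t)\approx t^{-1}$ in parts (iii),(iv), the left-hand side of \eqref{eq:index} with exponent $v$ is, up to the cut-off at small $t$, the quasi-norm of $L_{p,v}(\R^n)$ (resp. $L_{1,v}(\R^n)$) applied to $f$. Thus each assertion splits into the envelope function (an upper bound and its sharpness) and the index (an upper and a lower bound). I would extract the two upper bounds from a single strong embedding, and then produce both the sharpness of the envelope function and the index lower bound from explicit superpositions of atoms.

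For the upper bounds I would invoke the limiting embeddings at the line $s=\sigma_p$: for $1<p<\infty$ one has $B^0_{p,q}(\R^n)\hookrightarrow L_p(\R^n)$ precisely when $q\le\min(p,2)$ and $F^0_{p,q}(\R^n)\hookrightarrow L_p(\R^n)$ precisely when $q\le 2$, while for $0<p\le 1$ one has $B^{\sigma_p}_{p,q}(\R^n)\hookrightarrow L_1(\R^n)$ for $q\le 1$ and $F^{\sigma_p}_{p,q}(\R^n)\hookrightarrow L_1(\R^n)$ for all $q$. Because $L_r=L_{r,r}$, each embedding simultaneously yields $\mathcal{E}^X_G(t)\lesssim t^{-1/r}$ and the index bound $u_X\le r$, that is $u_X\le p$ in (i),(ii) and $u_X\le 1$ in (iii),(iv). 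The reverse estimate for the envelope function I would read off from single atoms: a normalized atom $a_j$ concentrated at dyadic level $j$ has $\|a_j|X\|\approx 1$, support of measure $\approx 2^{-jn}$, and height $\approx 2^{j(n/p-s)}$; evaluating at $t=2^{-jn}$ gives $\mathcal{E}^X_G(2^{-jn})\gtrsim 2^{j(n/p-s)}$, and monotonicity fills in the intermediate $t$ to give $\mathcal{E}^X_G(t)\gtrsim t^{-1/p}$ (resp. $t^{-1}$).

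The decisive point is the index lower bound, and here the Besov and Triebel-Lizorkin cases diverge. I would test \eqref{eq:index} on $f=\sum_j\lambda_j a_j$, using one normalized level-$j$ atom $a_j$ with pairwise disjoint supports of measure $\approx 2^{-jn}$. Each $a_j$ contributes to $f^*$ a plateau of height $\approx\lambda_j 2^{j(n/p-s)}$ over a set of measure $\approx 2^{-jn}$, so that on the dyadic block $t\approx 2^{-jn}$ one has $f^*(t)/\mathcal{E}^X_G(t)\approx\lambda_j$ and the left-hand side of \eqref{eq:index} is comparable to $\|(\lambda_j)|\ell_v\|$ in every case. The cases differ only in the quasi-norm of $f$: the Besov sequence space sums over the levels in $\ell_q$, giving $\|f|X\|\approx\|(\lambda_j)|\ell_q\|$, whereas for the Triebel-Lizorkin space the disjointness of supports collapses the inner $\ell_q$ pointwise and the outer $L_p$-integration produces $\|f|X\|\approx\|(\lambda_j)|\ell_p\|$, independently of $q$. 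Choosing $(\lambda_j)\in\ell_q\setminus\ell_v$ with $v<q$ then refutes \eqref{eq:index} in the Besov parts (i),(iii), forcing $u_X\ge q$; choosing $(\lambda_j)\in\ell_p\setminus\ell_v$ with $v<p$ refutes it in the Triebel-Lizorkin parts (ii),(iv), forcing $u_X\ge p$. Combined with the upper bounds this delivers the stated ranges in (i),(iii),(iv) and the exact value $u=p$ in (ii), where the two bounds coincide.

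The main obstacle I anticipate is making the Triebel-Lizorkin computation genuinely sharp at the critical smoothness, where the subcritical slack has disappeared: one must show that $\|f|X\|$ for the superposition is comparable to $\|(\lambda_j)|\ell_p\|$ from both sides, not merely bounded above by it. I would secure this through the atomic (or wavelet) characterization, using the disjointness of the supports to decouple the inner $\ell_q$ from the outer $L_p$, and I would verify the two-sided rearrangement estimate directly on the explicit step profiles generated by the construction. A secondary, more routine point is to confirm that restricting \eqref{eq:index} to small $t$ costs nothing, which follows because all building blocks live at scales $2^{-jn}\to 0$.
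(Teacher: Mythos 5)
The paper does not actually prove Theorem \ref{thm1.1}: it is quoted as known, with references to \cite[Rem.~12.5, 15.1]{T-SF} and \cite[Prop.~8.12, 8.14 and Rem.~8.15]{H}, and the paper's own contribution (Theorem \ref{thm1.2}) consists precisely in closing the gaps $q\le u\le p$ and $q\le u\le 1$ left open here. Your proposal reconstructs, essentially correctly, the standard argument from those references. The upper bounds for both the envelope function and the index do follow from the limiting embeddings into $L_p$ resp.\ $L_1$ read as $L_{p,p}$ resp.\ $L_{1,1}$ statements; the envelope lower bound from single normalized atoms is correct; and the dichotomy you isolate for the index lower bound --- one disjointly supported atom per level, so that $\|f|X\|\lesssim\|(\lambda_j)|\ell_q\|$ for the $B$-spaces while the pointwise collapse of the inner $\ell_q$-sum gives $\|f|X\|\lesssim\|(\lambda_j)|\ell_p\|$ for the $F$-spaces --- is exactly the mechanism that produces $u\ge q$ versus $u\ge p$. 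Note that in the $F$-case only this one-sided estimate is needed (you merely have to place the test function in a ball of $X$ while making the left-hand side of \eqref{eq:index} diverge), so the two-sided comparison you worry about in your final paragraph is not required.

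Two technical points should be made explicit. First, since $s=\sigma_p$ lies exactly on the borderline, the atoms must carry sufficiently many vanishing moments (the paper's own extremal function in the proof of Theorem \ref{thm1.2}(i) uses $\int_{\R^n}\psi(x)\,dx=0$); this is compatible with the plateau profile of $|f|$ that your rearrangement estimate needs, but it has to be said. Second, the limiting embeddings you invoke are themselves nontrivial theorems at this smoothness and should be cited rather than treated as evident: $B^{\sigma_p}_{p,q}(\R^n)\hookrightarrow L_1(\R^n)$ for $0<p\le 1$, $0<q\le 1$ is \cite[Theorem 3.2.1]{SiTr}, the case $F^{\sigma_p}_{p,q}$ with $0<p<1$ rests on the Jawerth embedding as in the paper's proof of Theorem \ref{thm1.2}(iii), and at $p=1$ one has $F^0_{1,2}(\R^n)=h_1(\R^n)\hookrightarrow L_1(\R^n)$ rather than the Littlewood--Paley identity. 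With these points supplied, your proof is a faithful version of the one in the cited literature.
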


We fill all the above mentioned gaps.
\begin{thm} \label{thm1.2}
(i) Let $1\le p<\infty$ and $0<q\le \min(p,2)$. Then 
\begin{equation*}
 \mathfrak E_G(B^0_{p,q})=(t^{-\frac{1}{p}},p).
\end{equation*}
(ii) Let $0<p<1$ and $0<q\le 1$. Then 
\begin{equation*}
 \mathfrak E_G(B^{\sigma_p}_{p,q})=(t^{-1},q).
\end{equation*}
(iii) Let $0<p<1$ and $0<q\le\infty$. Then
\begin{equation*}
 \mathfrak E_G(F^{\sigma_p}_{p,q})=(t^{-1},p).
\end{equation*}
\end{thm}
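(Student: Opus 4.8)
\textit{Strategy.} The plan is to reduce, in all three parts, the growth-envelope inequality \eqref{eq:index} to an equivalent inequality for the coefficients of an atomic decomposition, and then to combine this with the one-sided bounds already contained in Theorem \ref{thm1.1}, so that in each case only a single inequality for the index $u$ remains to be proved. I would use $L_\infty$-normalised atoms $a_{\nu m}$ with $\supp a_{\nu m}\subset c\,Q_{\nu m}$, where $Q_{\nu m}$ is a cube of side $2^{-\nu}$, for which $\|f|B^s_{p,q}\|\approx(\sum_\nu 2^{\nu(s-n/p)q}(\sum_m|\lambda_{\nu m}|^p)^{q/p})^{1/q}$ and $\|f|F^s_{p,q}\|\approx\|(\sum_{\nu,m}(2^{\nu s}|\lambda_{\nu m}|\chi_{Q_{\nu m}})^q)^{1/q}|L_p\|$, both reducing, on a single level $\nu$, to $2^{\nu(s-n/p)}(\sum_m|\lambda_{\nu m}|^p)^{1/p}$. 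Since $s-n/p=-n/p$ in (i) and $\sigma_p-n/p=-n$ in (ii), (iii), these exponents reproduce the envelope functions $t^{-1/p}$ and $t^{-1}$ of Theorem \ref{thm1.1}; only the index is at stake.

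In part (i) the embedding $B^0_{p,q}\hookrightarrow F^0_{p,2}=L_p$ (available precisely because $q\le\min(p,2)$, together with $B^0_{1,q}\hookrightarrow L_1$ for $q\le1$) gives $u\le p$ for every $1\le p<\infty$, so the only new point is the lower bound $u\ge p$, i.e. that \eqref{eq:index} fails for each $v<p$. Here I would use single-level extremal functions: with a fixed bump $\phi$ put $f_j=\sum_{k=1}^{2^{jn}}\lambda_k\,\phi(2^j\cdot-m_k)$, the centres $m_k$ tiling the unit cube and $|\lambda_k|=2^{jn/p}k^{-1/p}$. As everything lives on one level, $\|f_j|B^0_{p,q}\|\approx 2^{-jn/p}(\sum_k|\lambda_k|^p)^{1/p}\approx j^{1/p}$, while the disjoint supports give $f_j^*(t)\approx t^{-1/p}$ on $(2^{-jn},1)$; hence the left-hand side of \eqref{eq:index} is $\approx j^{1/v}$, and the quotient $j^{1/v-1/p}$ tends to $\infty$ exactly when $v<p$. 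This yields $u\ge p$ and at the same time settles the endpoint $p=1$.

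In parts (ii) and (iii) Theorem \ref{thm1.1} already supplies the lower bounds $u\ge q$ and $u\ge p$, so the new content is the upper bound, i.e. the local embeddings $B^{\sigma_p}_{p,q}\hookrightarrow L_{1,q}$ and $F^{\sigma_p}_{p,q}\hookrightarrow L_{1,p}$. For the Besov case I would pass to the sequence space and estimate level by level. On a single level the rearrangement of $f_\nu$ is that of the heights $|\lambda_{\nu m}|$ over cubes of measure $2^{-\nu n}$, so that $\|f_\nu|L_{1,q}\|^q\approx 2^{-\nu nq}\sum_k(\lambda^*_{\nu,k})^q k^{q-1}$, where $\lambda^*_{\nu,\cdot}$ is the non-increasing rearrangement of $(\lambda_{\nu m})_m$. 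The elementary bound $\lambda^*_{\nu,k}\le k^{-1/p}(\sum_m|\lambda_{\nu m}|^p)^{1/p}$ together with $\sum_k k^{q-1-q/p}<\infty$, which converges precisely because $p<1$, gives $\|f_\nu|L_{1,q}\|\lesssim\|f_\nu|B^{\sigma_p}_{p,q}\|$. Feeding these single-level estimates into the outer $\ell_q$-sum over $\nu$ built into the Besov norm, and using $q\le1$, a Hardy-type inequality in the sequence space then produces $u\le q$; the two hypotheses $p<1$ and $q\le1$ of (ii) enter here and nowhere else.

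The genuinely hard part is (iii). I would first reduce to $q=\infty$ via $F^{\sigma_p}_{p,q}\hookrightarrow F^{\sigma_p}_{p,\infty}$, settling all $q$ at once, after which the sequence norm is $\|F|L_p\|$ with $F:=\sup_{\nu,m}2^{\nu\sigma_p}|\lambda_{\nu m}|\chi_{Q_{\nu m}}$. Writing, for $0<p<1$,
\[\|f|L_{1,p}\|^p=\int_0^\infty\lambda^{p-1}\mu_f(\lambda)^p\,d\lambda,\qquad \|F|L_p\|^p=p\int_0^\infty\lambda^{p-1}\mu_F(\lambda)\,d\lambda,\qquad \mu_g(\lambda):=\mu\{|g|>\lambda\},\]
exposes the difficulty: the crude pointwise bound $|f|\lesssim\sum_\nu 2^{-\nu\sigma_p}G_\nu\lesssim F$ (with $G_\nu=\sup_m 2^{\nu\sigma_p}|\lambda_{\nu m}|\chi_{Q_{\nu m}}$) only gives $\mu_f(\lambda)\le\mu_F(c\lambda)$, which suffices where $\mu_F\ge1$ but is far too weak on the small level sets (large $\lambda$) that carry the singularity, where one needs $\mu_f(\lambda)\ll\mu_F(\lambda)^{1/p}$. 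Unlike the Besov norm, which is already an $\ell_q$-sum over $\nu$, the $F$-norm is the $L_p$-norm of a supremum over $\nu$, so a level-by-level treatment overshoots by $\sum_\nu\|G_\nu|L_p\|^p\ge\|F|L_p\|^p$. I expect the whole difficulty to concentrate in replacing this by an estimate that feels the overlap of scales; I would attack it through a stopping-time decomposition along the level sets $\{F>2^j\}$ of the supremum, bounding on each the measure of $\{|f|\gtrsim2^j\}$ by the local contribution to $\|F|L_p\|^p$ and exploiting $0<p<1$ to sum the resulting geometric series. The matching sharpness $u\ge p$ is then taken from Theorem \ref{thm1.1}, or re-derived from the single-level extremal functions of part (i) transplanted to smoothness $\sigma_p$.
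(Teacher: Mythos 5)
Your parts (i) and (ii) are essentially correct and follow the paper's route. In (i) the paper likewise uses single-level extremal functions, only with the log-corrected coefficients $\lambda_{j\,m}=m^{-1/p}\log^{-1/v}(m+1)$, which keep $\|f_j|B^0_{p,q}\|\lesssim 2^{-jn/p}$ uniformly while the left-hand side of \eqref{eq:index} diverges like $(\sum_m (m\log(m+1))^{-1})^{1/v}$; your pure-power variant $\lambda_k=2^{jn/p}k^{-1/p}$ with the ratio $j^{1/v-1/p}$ works just as well. One point you should make explicit: the bump must satisfy $\int\phi=0$, since for $s=0=\sigma_p$ the atomic decomposition theorem requires a vanishing first moment, and without it the upper bound $\|f_j|B^0_{p,q}\|\lesssim j^{1/p}$ is not justified. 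In (ii) your single-level estimate via $\lambda^*_{\nu,k}\le k^{-1/p}\|\lambda_\nu|\ell_p\|$ and $\sum_k k^{q(1-1/p)-1}<\infty$ is a clean alternative to the paper's two-case argument ($\ell_p\hookrightarrow\ell_q$ versus H\"older); but the step where you sum over $\nu$ is not a generic ``Hardy-type inequality'' --- what is needed is precisely that $\|\cdot|L_{1,q}\|$ is a $q$-norm for $0<q\le1$, which the paper isolates as Lemma \ref{lem1q} and proves via \eqref{eqq} and Hardy's lemma. You should state and prove that fact rather than gesture at it.

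Part (iii) is where you have a genuine gap. You correctly diagnose why a direct attack on $F^{\sigma_p}_{p,\infty}\hookrightarrow L_{1,p}$ is delicate (the level-by-level bound overshoots, the crude pointwise bound $\mu_f(\lambda)\le\mu_F(c\lambda)$ is too weak at small level sets), but you then only announce a stopping-time decomposition along $\{F>2^j\}$ without carrying it out; as written, (iii) is not proved. The paper avoids the difficulty entirely: by the Jawerth embedding, $F^{\sigma_p}_{p,q}(\R^n)\hookrightarrow B^{\sigma_{\tilde p}}_{\tilde p,p}(\R^n)$ for any $p<\tilde p<1$ --- crucially, the third index of the target is $p$, independent of $q$, so this simultaneously disposes of the reduction to $q=\infty$ --- and then part (ii) applied with parameters $(\tilde p,p)$ gives $B^{\sigma_{\tilde p}}_{\tilde p,p}(\R^n)\hookrightarrow L_{1,p}(\R^n)$. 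The whole of (iii) is thus two lines given (ii). If you want to pursue your direct argument instead, you would in effect be reproving a form of the Jawerth--Franke embedding in this endpoint situation, and you would need to actually produce the estimate $\mu\{|f|>c\lambda\}\lesssim$ (local contribution to $\|F|L_p\|^p$)$/\lambda^p$ on each stopping cube and verify that the pieces sum; none of that is present.
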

We also reformulate these results as optimal embeddings into the scale 
of Lorentz spaces (cf. Definition \ref{Lpq}):
\begin{thm} \label{thm1.3}
(i) Let $1\le p<\infty$ and $0<q\le \min(p,2)$. Then 
\begin{equation*}
 B^0_{p,q}(\R^n)\hookrightarrow L_{p}(\R^n).
\end{equation*} 
(ii) Let $0<p<1$ and $0<q\le 1$. Then 
\begin{equation}\label{emb}
 B^{\sigma_p}_{p,q}(\R^n)\hookrightarrow L_{1,q}(\R^n).
\end{equation}
(iii) Let $0<p<1$ and $0<q\le\infty$. Then
\begin{equation*}
 F^{\sigma_p}_{p,q}(\R^n)\hookrightarrow L_{1,p}(\R^n)
\end{equation*}
and all these embeddings are optimal with respect to the second fine parameter of the scale of the Lorentz spaces.
\end{thm}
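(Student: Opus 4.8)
The plan is to deduce Theorem 1.3 from Theorem 1.2 by translating statements about growth envelopes into embeddings into Lorentz spaces. The key observation is that the growth envelope pair $\mathfrak E_G(X)=({\mathcal E}_G^X,u_X)$ encodes exactly the two pieces of data needed to identify the optimal target Lorentz space: the power $t^{-\alpha}$ in ${\mathcal E}_G^X$ fixes the first index, and the envelope index $u_X$ fixes the second fine index. Concretely, I would first recall from Definition \ref{Lpq} that $\|f|L_{p,q}\|\approx \big(\int_0^\infty [t^{1/p}f^*(t)]^q\,\frac{dt}{t}\big)^{1/q}$ (with the usual sup-modification). The defining inequality \eqref{eq:index} for the index, when ${\mathcal E}_G^X(t)\approx t^{-\alpha}$, reads $\big(\int_0^\epsilon [t^{\alpha}f^*(t)]^{u}\,\frac{dt}{t}\big)^{1/u}\le c\,\|f|X\|$, which is precisely the (local) Lorentz quasinorm $\|f|L_{1/\alpha,u}\|$ up to the behaviour near infinity.

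The main steps then run as follows. First, I would establish the three embeddings themselves. For part (i), where $\mathfrak E_G(B^0_{p,q})=(t^{-1/p},p)$ with $1\le p<\infty$, the index $u=p$ combined with the exponent $t^{-1/p}$ gives $L_{p,p}=L_p$; I would note that for $1\le p<\infty$ the local control near $t=0$ supplied by \eqref{eq:index} together with the global membership $B^0_{p,q}\hookrightarrow L_p$ (which follows from the elementary embedding $B^0_{p,q}\hookrightarrow B^0_{p,\max(p,q)}$ and known $L_p$-bounds, or directly since $q\le p$) upgrades the local estimate to the genuine norm on $(0,\infty)$. For parts (ii) and (iii), the exponent $t^{-1}$ forces the first Lorentz index to be $1$, and the envelope indices $u=q$ and $u=p$ respectively yield the targets $L_{1,q}$ and $L_{1,p}$, exactly as in \eqref{emb}.

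Second, and this is where the real content lies, I would prove optimality with respect to the second fine parameter. Optimality means: for each result, the index appearing cannot be lowered, i.e.\ $A^s_{p,q}\not\hookrightarrow L_{1/\alpha,v}$ for any $v<u_X$. This is precisely the content of the lower bound for the envelope index $u_X$ already asserted in Theorem \ref{thm1.2}, so the translation is essentially tautological once the dictionary between \eqref{eq:index} and the Lorentz quasinorm is in place: a strictly smaller $v$ would violate the infimum-defining property of $u_X$. I would make this rigorous by exhibiting (or invoking the extremal functions already constructed in the proof of Theorem \ref{thm1.2}) a sequence $f_j\in A^s_{p,q}$ with $\|f_j|A^s_{p,q}\|\le C$ but $\|f_j|L_{1/\alpha,v}\|\to\infty$ for every $v<u_X$.

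The hard part will be the careful handling of the behaviour at infinity and the passage from the \emph{local} estimate \eqref{eq:index} on $(0,\epsilon)$ to a \emph{global} Lorentz embedding on $(0,\infty)$; near $t=0$ everything is governed by the envelope, but the tail $t\to\infty$ requires a separate argument, most cleanly via the embedding into $L_p$ (respectively a substitute for $p<1$) to bound $f^*$ on $(\epsilon,\infty)$. For $0<p<1$ in parts (ii) and (iii) one must be attentive that the relevant "larger" space controlling the tail is not simply $L_p$, and I expect that the cleanest route is to combine the local growth-envelope information near zero with the a priori membership $A^{\sigma_p}_{p,q}\hookrightarrow L_1^{\rm loc}$ guaranteed by \eqref{B1}–\eqref{F1} together with a compact-support reduction, so that the tail contributes nothing and the global Lorentz quasinorm reduces to its part over $(0,\epsilon)$.
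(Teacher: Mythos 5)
Your treatment of part (i) and of the optimality assertions is sound and is essentially the paper's: the embedding $B^0_{p,q}(\R^n)\hookrightarrow L_p(\R^n)$ comes from classical embeddings ($B^0_{1,q}\hookrightarrow B^0_{1,1}\hookrightarrow L_1$ for $p=1$ and $B^0_{p,q}\hookrightarrow F^0_{p,2}=L_p$ for $1<p<\infty$), and non-embeddability into $L_{p,v}$ for $v<p$, resp.\ $L_{1,v}$ for $v<u_X$, does follow from the lower bounds on the envelope indices, because a global Lorentz embedding restricted to $(0,\epsilon)$ yields \eqref{eq:index}.

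The gap is in the embeddings of parts (ii) and (iii), which you propose to derive from Theorem \ref{thm1.2}. Two things go wrong. First, the envelope index $u_X$ is defined as an \emph{infimum} of admissible exponents $v$ in \eqref{eq:index}; knowing $u_X=q$ gives \eqref{eq:index} for every $v>q$ but says nothing about the endpoint $v=q$ itself, which is exactly what \eqref{emb} asserts. The translation is therefore not ``essentially tautological'': the content of Theorem \ref{thm1.3}(ii),(iii) is precisely that the infimum is attained, and in the paper the logic runs the other way round --- Theorem \ref{thm1.2}(ii),(iii) are \emph{deduced from} Theorem \ref{thm1.3}(ii),(iii), which are proved directly; your reduction would be circular or, at best, would only give $L_{1,v}$ for $v>q$. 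Second, the local-to-global passage you flag as ``the hard part'' cannot be repaired as you suggest: elements of $B^{\sigma_p}_{p,q}(\R^n)$ need not have compact support, and for $q<1$ the tail $\int_\epsilon^\infty [t f^*(t)]^q\,\frac{dt}{t}$ is \emph{not} controlled by $||f|L_1(\R^n)||$, since the pointwise bound $f^*(t)\le t^{-1}||f|L_1(\R^n)||$ only produces the divergent integrand $t^{-1}$. The proof that actually works is direct and global: take an optimal atomic decomposition $f=\sum_j f_j$, use that $||\cdot|L_{1,q}(\R^n)||^q$ is subadditive for $q\le 1$ (Lemma \ref{lem1q}, proved via Hardy's lemma), and estimate each level by $||f_j|L_{1,q}(\R^n)||\lesssim 2^{-jn}||\lambda_j|\ell_p||$ through the rearrangement of the coefficient sequence (splitting the cases $p\le q$ and $p>q$ via H\"older); part (iii) then follows from part (ii) by the Jawerth embedding $F^{\sigma_p}_{p,q}(\R^n)\hookrightarrow B^{\sigma_{\tilde p}}_{\tilde p,p}(\R^n)$. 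None of these ingredients appears in your plan.
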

\begin{rem}
(i) Let us also observe, that \eqref{emb} improves \cite[Theorem 3.2.1]{SiTr} and 
\cite[Theorem 2.2.3]{SR}, where the embedding 
$B^{n(\frac 1p-1)}_{p,q}(\R^n)\hookrightarrow L_{1}(\R^n)$ is proved for all $0<p<1$ and $0<q\le 1.$

(ii) Let us also mention, that growth envelopes for function spaces with minimal smoothness were
recently studied in \cite{CGO}. These authors worked with spaces defined by differences and
therefore their results are of a different nature.
\end{rem}

\section{Preliminaries, notation and definitions}
We use standard notation: $\N$ denotes the collection of all natural numbers,
$\R^n$ is the Euclidean $n$-dimensional space, where $n\in\N$, and $\C$ stands for the complex plane.

\begin{dfn}\label{Lpq}
(i) Let $0< p\le\infty$. We denote by $L_p(\R^n)$ the Lebesgue spaces endowed with the quasi-norm
$$
 ||f|L_p(\R^n)||=\begin{cases}
  \displaystyle\biggl(\int_{\R^n}|f(x)|^pd x\biggr)^{1/p},\quad& 0< p<\infty,\\
  \displaystyle\esssup_{x\in\R^n} |f(x)|,\quad &p=\infty.
 \end{cases}
$$
(ii) Let $0<p,q\le\infty$. Then the Lorentz space $L_{p,q}(\R^n)$ consists of all $f\in L_1^{\rm loc}(\R^n)$
such that the quantity
$$
||f|L_{p,q}(\R^n)||=\begin{cases}
\displaystyle\left(\int_0^\infty [t^{\frac 1p}f^*(t)]^q\frac{dt}{t}\right)^{1/q},\quad 0<q<\infty,\\
\displaystyle \sup_{0<t<\infty} t^{\frac 1p}f^*(t),\quad q=\infty
\end{cases}
$$
is finite
\end{dfn}
\begin{rem}
These definitions are well-known, we refer to \cite[Chapter 4.4]{BS} for details and further references.
We shall need only very few properties of these spaces. Obviously, $L_{p,p}(\R^n)=L_p(\R^n)$. 
If $0< q_1 \le q_2 \le\infty$, then $L_{p,q_1}(\R^n)\hookrightarrow L_{p,q_2}(\R^n)$ - so the Lorentz spaces
are monotonically ordered in $q$. We shall make use of the following lemma:
\end{rem}
\begin{lem}\label{lem1q}
Let $0<q<1$. Then the $||\cdot|L_{1,q}(\R^n)||$ is the $q$-norm, it means
$$
 ||f_1+f_2|L_{1,q}(\R^n)||^q\le  ||f_1|L_{1,q}(\R^n)||^q+ ||f_2|L_{1,q}(\R^n)||^q
$$
holds for all $f_1,f_2\in L_{1,q}(\R^n).$
\end{lem}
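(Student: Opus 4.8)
The plan is to reduce everything to the decreasing rearrangement and then to exploit that the weight occurring in the $L_{1,q}$-functional is decreasing. By Definition \ref{Lpq} with $p=1$ one has
\[
\|f|L_{1,q}(\R^n)\|^q=\int_0^\infty t^{q-1}f^*(t)^q\,\d t .
\]
The naive route is to insert the elementary rearrangement estimate $(f_1+f_2)^*(t)\le f_1^*(t/2)+f_2^*(t/2)$ together with $(a+b)^q\le a^q+b^q$ (valid since $0<q<1$); this is correct but produces the inequality only with an extra factor $2^q$. Obtaining the \emph{sharp} constant $1$ is precisely the content of the lemma and is the main obstacle: every estimate that splits the level variable symmetrically loses a factor $2^q$, so the argument must avoid splitting the level altogether.

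To this end I would first linearise the weight. Since $0<q<1$, the function $t\mapsto t^{q-1}$ is decreasing and can be written as a superposition of indicator weights,
\[
t^{q-1}=(1-q)\int_t^\infty \sigma^{q-2}\,\d\sigma=\int_0^\infty \mathbf 1_{(0,\sigma)}(t)\,\d\nu(\sigma),\qquad \d\nu(\sigma)=(1-q)\sigma^{q-2}\,\d\sigma ,
\]
where $\nu$ is a positive measure on $(0,\infty)$. Substituting this into the formula above and using Tonelli's theorem (all integrands are non-negative) gives
\[
\|f|L_{1,q}(\R^n)\|^q=\int_0^\infty \Psi_\sigma(f)\,\d\nu(\sigma),\qquad \Psi_\sigma(f):=\int_0^\sigma f^*(t)^q\,\d t .
\]
Because $\nu\ge 0$, it suffices to prove that each functional $\Psi_\sigma$ is subadditive, that is $\Psi_\sigma(f_1+f_2)\le\Psi_\sigma(f_1)+\Psi_\sigma(f_2)$ for every fixed $\sigma>0$; integrating this inequality against $\d\nu$ then yields the claim.

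The subadditivity of $\Psi_\sigma$ is where the sharp constant is recovered, and here I would invoke the Hardy--Littlewood characterisation of the truncated rearrangement integral. Since $x\mapsto x^q$ is non-decreasing, $f^*(t)^q=(|f|^q)^*(t)$, and therefore (see \cite{BS})
\[
\Psi_\sigma(f)=\int_0^\sigma (|f|^q)^*(t)\,\d t=\sup_{\mu(E)=\sigma}\int_E |f|^q\,\d\mu .
\]
Now the pointwise bound $|f_1+f_2|^q\le(|f_1|+|f_2|)^q\le |f_1|^q+|f_2|^q$ (again using $0<q<1$) together with the fact that the supremum of a sum does not exceed the sum of the suprema gives
\[
\Psi_\sigma(f_1+f_2)\le\sup_{\mu(E)=\sigma}\int_E\big(|f_1|^q+|f_2|^q\big)\,\d\mu\le\Psi_\sigma(f_1)+\Psi_\sigma(f_2),
\]
which finishes the proof after integration in $\sigma$. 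The decisive gain over the naive approach is that passing from $f$ to $|f|^q$ converts the $q$-triangle inequality into an honest \emph{pointwise} estimate, while the supremum representation renders the remaining step linear; consequently no spurious factor $2^q$ ever enters, and the constant $1$ is obtained.
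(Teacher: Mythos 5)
Your proof is correct and follows essentially the same route as the paper: both arguments rest on the identity $(f^*)^q=(|f|^q)^*$, the pointwise inequality $(|f_1|+|f_2|)^q\le|f_1|^q+|f_2|^q$, and the subadditivity of $\xi\mapsto\int_0^\xi g^*(t)\,dt$ played against the monotonicity of the weight $t^{q-1}$. Your linearisation of the weight as a superposition of indicators $\mathbf 1_{(0,\sigma)}$ is exactly the standard proof of Hardy's lemma, which the paper instead invokes directly from Bennett--Sharpley, so the two proofs differ only in how much of that auxiliary machinery is unpacked.
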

\begin{proof}
First note, that the function $s\to s^q$ is increasing for all $0<q<\infty$ on $(0,\infty)$. 
This leads to the identity 
\begin{equation}\label{eqq}
 (|f|^q)^*(t)= (f^*)^q(t),
\end{equation}
which holds for all $t>0$, $0<q<\infty$ and all measurable functions $f$. The reader may also consult
\cite[Proposition 2.1.7]{BS}. Using \eqref{eqq} and $0<q<1$, we obtain
\begin{align*}
||f_1+f_2|L_{1,q}(\R^n)||^q&=
\int_0^\infty t^{q-1}((f_1+f_2)^*(t))^qdt
\le \int_0^\infty t^{q-1}((|f_1|+|f_2|)^*(t))^qdt\\
&= \int_0^\infty t^{q-1}((|f_1|+|f_2|)^q)^*(t)dt
\le \int_0^\infty t^{q-1}(|f_1|^q+|f_2|^q)^*(t)dt.
\end{align*}
We observe, that $t:\to t^{q-1}$ is a decreasing function on $(0,\infty)$ and that 
$$
 \int_0^\xi (|f_1|^q+|f_2|^q)^*(t) dt\le \int_0^\xi (|f_1|^q)^*(t)dt +\int_0^\xi (|f_2|^q)^*(t) dt
$$
holds for all $\xi\in(0,\infty)$.
Hence, by Hardy's lemma (cf. \cite[Proposition 2.3.6]{BS}),
\begin{align*}
||f_1+f_2|L_{1,q}(\R^n)||^q&\le 
\int_0^\infty t^{q-1}(|f_1|^q)^*(t)dt+\int_0^\infty t^{q-1}(|f_2|^q)^*(t)dt\\
&=||f_1|L_{1,q}(\R^n)||^q+ ||f_2|L_{1,q}(\R^n)||^q.
\end{align*}
\end{proof}

Let $S(\R^n)$ be the Schwartz space of all complex-valued rapidly decreasing, infinitely differentiable
functions on $\R^n$ and let $S'(\R^n)$ be its dual - the space of all tempered distributions.

For $f\in S'(\R^n)$ we denote by $\widehat f= F f$ its Fourier transform and by 
$f^\vee$ or $F^{-1}f$ its inverse Fourier transform.

We give a Fourier-analytic definition of Besov and Triebel-Lizorkin spaces, which relies
on the so-called {\it dyadic resolution of unity}. Let $\phii\in S(\R^n)$ with
\begin{equation}\label{eq0}
 \phii(x)=1\quad \text{if}\quad |x|\le 1\quad\text{and}\quad\phii(x)=0\quad\text{if}\quad|x|\ge \frac 32.
\end{equation}We put $\phii_0=\phii$ and $\phii_j(x)=\phii(2^{-j}x)-\phii(2^{-j+1}x)$ for $j\in\N$ and $x\in\R^n.
$
This leads to the identity
\begin{equation*}
 \sum_{j=0}^\infty\phii_j(x)=1,\qquad x\in\R^n.
\end{equation*}
\begin{dfn}\label{defsp}
(i) Let $s\in\R, 0< p,q\le\infty$. Then $B^{s}_{pq}(\R^n)$ is the collection
of all $f\in S'(\R^n)$ such that
\begin{equation}\label{eq1}
 ||f|B^s_{pq}(\R^n)||=\biggl(\sum_{j=0}^\infty 2^{jsq}||(\phii_j \widehat f)^\vee|L_p(\R^n)||^q\biggr)^{1/q}<\infty
\end{equation}
(with the usual modification for $q=\infty$).

(ii) Let $s\in\R, 0< p<\infty, 0< q\le\infty$. Then $F^{s}_{pq}(\R^n)$ is the collection
of all $f\in S'(\R^n)$ such that
\begin{equation}\label{eq2}
 ||f|F^s_{pq}(\R^n)||=\biggl|\biggl|\biggl(\sum_{j=0}^\infty 2^{jsq}|(\phii_j \widehat f)^\vee(\cdot)|^q\biggr)^{1/q}|L_p(\R^n)\biggr|\biggr|<\infty
\end{equation}(with the usual modification for $q=\infty$).
\end{dfn}
\begin{rem}
These spaces have a long history. In this context we recommend \cite {P}, \cite{T-FS1}, \cite{T-FS2} and \cite{T-FS3}
as standard references. We point out that the spaces $B^s_{pq}(\R^n)$ and $F^s_{pq}(\R^n)$
are independent of the choice of $\phii$ in the sense of equivalent (quasi-)norms.
Special cases of these two scales include Lebesgue spaces, Sobolev spaces,
H\"older-Zygmund spaces and many other important function spaces.
\end{rem}

We introduce the sequence spaces associated with the Besov and Triebel-Lizorkin spaces.
Let $m\in\Z^n$ and $j\in\N_0$. Then $Q_{j\, m}$ denotes the closed cube in $\R^n$
with sides parallel to the coordinate axes, centred at $2^{-j}m$,
and with side length $2^{-j}$. By $\chi_{j\, m}=\chi_{Q_{j\,m}}$ we denote the characteristic
function of $Q_{j\,m}$. If
$$
\lambda=\{\lambda_{j\,m}\in\C:j\in\N_0, m\in\Z^n\},
$$
$-\infty<s<\infty$ and $0<p,q\le \infty$, we set
\begin{equation}\label{eq:2.10}
||\lambda| b^s_{pq}||=\biggl(\sum_{j=0}^\infty 2^{j (s-\frac np)q}\Bigl(\sum_{m\in\Z^n}|\lambda_{j\, m}|^p
\Bigr)^{\frac qp}\biggr)^\frac 1q
\end{equation}
appropriately modified if $p=\infty$ and/or $q=\infty$. If $p<\infty$, we define also
\begin{equation}\label{defspqf'}
||\lambda|f^{s}_{pq}||=\biggl|\biggl|
\biggl(\sum_{j=0}^\infty \sum_{m\in\Z^n}|2^{j s}\lambda_{j\,m}\chi_{j\,m}(\cdot)|^q\biggr)^{1/q}
|L_p(\R^n)\biggr|\biggr|.
\end{equation}
The connection between the function spaces $B^s_{pq}(\R^n)$, $F^s_{pq}(\R^n)$ and
the sequence spaces $b^s_{pq}$, $f^s_{pq}$ may be given by various decomposition
techniques, we refer to \cite[Chapters 2 and 3]{T-FS3} for details and further references.

All the unimportant constants are denoted by the letter $c$, whose meaning may differ
from one occurrence to another. If $\{a_n\}_{n=1}^\infty$ and $\{b_n\}_{n=1}^\infty$
are two sequences of positive real numbers, we write $a_n\lesssim b_n$ if, and only if, there
is a positive real number $c>0$ such that $a_n\le c\, b_n, n\in\N.$ Furthermore,
$a_n\approx b_n$ means that $a_n\lesssim b_n$ and simultaneously $b_n\lesssim a_n$.

\section{Proofs of the main results}

\subsection{Proof of Theorem \ref{thm1.2} (i)}

In view of Theorem \ref{thm1.1}, it is enough to prove, that for $1\le p<\infty$ 
and $0<q\le\min(p,2)$ the index $u$ associated to $B^0_{p,q}(\R^n)$ is greater or equal to $p$.

We assume in contrary that \eqref{eq:index} is fulfilled for some $0<v<p$, $\epsilon>0$,
$c>0$ and all $f\in B^0_{p,q}(\R^n)$. 
Let $\psi$ be a non-vanishing $C^{\infty}$ function in $\R^n$ supported in $[0,1]^n$
with $\int_{\R^n}\psi(x)dx =0.$

Let $J\in\N$ be such that $2^{-Jn}<\epsilon$
and consider the function
\begin{equation}\label{atoms}
 f_j=\sum_{m=1}^{2^{(j-J)n}}\lambda_{j\, m}\psi (2^j(x-(m,0,\dots,0))),\quad j>J,
\end{equation}
where 
$$
 \lambda_{j\, m}=\frac{1}{m^{\frac 1p}\log^{\frac 1v} (m+1)},\quad m=1,\dots, 2^{(j-J)n}
$$
Then \eqref{atoms} represents an atomic decomposition of $f$ in the space 
$B^0_{p,q}(\R^n)$ according to \cite[Chapter 1.5]{T-FS3} and we obtain (recall that $v<p$)
\begin{align}\notag
 ||f_j|B^0_{p,q}(\R^n)||&\lesssim
 2^{-j\frac np}\left(\sum_{m=1}^{2^{(j-J)n}}\lambda_{j\, m}^p\right)^{1/p}
 \le 2^{-j\frac np}\left(\sum_{m=1}^\infty m^{-1}(\log(m+1))^{-\frac pv}\right)^{1/p}\\
 &\lesssim 2^{-j\frac np}\label{eq:small}.
\end{align}
On the other hand,
\begin{align*}
\left(\int_0^\epsilon \left[f_j^*(t)t^{\frac 1p}\right]^v\frac{dt}{t}\right)^{1/v}
&\ge \left(\int_0^{2^{-Jn}}f_j^*(t)^v t^{v/p-1} dt\right)^{1/v}
\gtrsim \left(\sum_{m=1}^{2^{(j-J)n}}\lambda_{j\, m}^v\int_{c\,2^{-jn}(m-1)}^{c\,2^{-jn}m} t^{v/p-1}dt\right)^{1/v}\\
&\gtrsim \left(\sum_{m=1}^{2^{(j-J)n}}\lambda_{j\, m}^v 2^{-jnv/p}m^{v/p-1}  \right)^{1/v}
=2^{-j\frac{n}{p}}\left(\sum_{m=1}^{2^{(j-J)n}}\frac{1}{m \log(m+1)}\right)^{1/v}.
\end{align*}
As the last series is divergent for $j\to\infty$, this is in a contradiction with \eqref{eq:small} and 
\eqref{eq:index} cannot hold for all $f_j,j>J.$

\begin{rem}
Observe, that Theorem \ref{thm1.3} (i) is a direct consequence of Theorem \ref{thm1.2} (i). The embeddings
$B^0_{1,q}(\R^n)\hookrightarrow B^0_{1,1}(\R^n)\hookrightarrow L_1(\R^n)$ if $p=1$
and
$B^0_{p,q}(\R^n)\hookrightarrow F^0_{p,2}(\R^n)=L_p(\R^n)$ if $1<p<\infty$ show, that
$B^0_{p,q}(\R^n)\hookrightarrow L_p(\R^n)$. And Theorem \ref{thm1.2} (i) implies that if 
$B^0_{p,q}(\R^n)\hookrightarrow L_{p,v}(\R^n)$ for some $0<v< \infty$, then $p\le v.$
This proves the optimality of Theorem \ref{thm1.3} (i) in the frame of the scala of Lorentz spaces.
\end{rem}

\subsection{Proof of Theorem \ref{thm1.2} (ii) and Theorem \ref{thm1.3} (ii)}

Let $0<p<1$, $0<q\le 1$ and $s=\sigma_p=n\left(\frac 1p-1\right)$. We prove first 
Theorem \ref{thm1.3} (ii), i.e. we show that
$$
 B^{\frac np-n}_{p,q}(\R^n)\hookrightarrow L_{1,q}(\R^n),
$$
or, equivalently,
$$
 \left(\int_0^\infty [tf^*(t)]^q\frac{dt}{t}\right)^{1/q}\le c\, ||f|B^{\frac np-n}_{p,q}(\R^n)||,
 \qquad f\in B^{\frac np-n}_{pq}(\R^n).
$$
Let 
$$
 f=\sum_{j=0}^\infty f_j=\sum_{j=0}^\infty \sum_{m\in \Z^n} \lambda_{j\, m} a_{j\, m}
$$
be the optimal atomic decomposition of an $f\in B^{\frac np-n}_{p,q}(\R^n)$, again in the sense of 
\cite{T-FS3}.
Then
\begin{equation}\label{eq:p1}
 ||f|B^{\frac np-n}_{p,q}(\R^n)||
 \approx \left(\sum_{j=0}^\infty 2^{-jqn}\left(\sum_{m\in\Z^n}|\lambda_{j\, m}|^p\right)^{q/p}\right)^{1/q}
\end{equation}
and by Lemma \ref{lem1q} 
\begin{equation}\label{eq:p2}
 ||f|L_{1,q}(\R^n)||= ||\sum_{j=0}^\infty f_j|L_{1,q}(\R^n)||
 \le \left(\sum_{j=0}^\infty ||f_j|L_{1,q}(\R^n)||^q\right)^{1/q}.
\end{equation}
We shall need only one property of the atoms $a_{j\, m}$, namely that their support is contained
in the cube $\tilde Q_{j\, m}$ - a cube centred at the point $2^{-j}m$ with sides parallel to the coordinate
axes and side length $\alpha 2^{-j}$, where $\alpha>1$ is fixed and independent of $f$. 
We denote by $\tilde\chi_{j\, m}(x)$ the characteristic functions of $\tilde Q_{j\, m}$
and by $\chi_{j\, l}$ the characteristic function of the interval $(l2^{-jn},(l+1)2^{-jn}).$
Hence
$$
 f_j(x)\le c\sum_{m\in\Z^n} |\lambda_{j\, m}|\tilde\chi_{j\, m}(x),\quad x\in\R^n
$$
and
\begin{align}
\notag
||f_j|L_{1,q}(\R^n)||
&\lesssim \left(\int_0^\infty \sum_{l=0}^\infty \left[(\lambda_j)^*_l \chi_{j\, l}(t)\right]^q t^{q-1}dt\right)^{1/q}
\le \left(\sum_{l=0}^\infty \left[(\lambda_j)^*_l\right]^q
\int_{2^{-jn}l}^{2^{-jn}(l+1)}t^{q-1}dt\right)^{1/q}\\
&\lesssim 2^{-jn} \left(\sum_{l=0}^\infty \left[(\lambda_j)^*_l\right]^q(l+1)^{q-1}\right)^{1/q}
\lesssim 2^{-jn}||\lambda_j|\ell_p||.
\label{eq:p3}
\end{align}
The last inequality follows by $(l+1)^{q-1}\le 1$ and $\ell_p\hookrightarrow \ell_q$ if $p\le q.$
If $p>q$, the same follows by H\"older's inequality with respect to indices $\alpha=\frac pq$
and $\alpha'=\frac{p}{p-q}$:
$$
\left(\sum_{l=0}^\infty \left[(\lambda_j)^*_{l}\right]^q (l+1)^{q-1}\right)^{1/q}
\le \left(\sum_{l=0}^\infty \left[(\lambda_j)^*_{l}\right]^{q\cdot\frac{p}{q}}\right)^{\frac 1q\cdot\frac{q}{p}}
\cdot \left(\sum_{l=0}^\infty (l+1)^{(q-1)\cdot \frac{p}{p-q}}\right)^{\frac{1}{q}\cdot\frac{p-q}{p}}
\le c\,||\lambda_j|\ell_p||.
$$
Here, we used that for $0<q<p<1$ the exponent $\frac{(q-1)p}{p-q}=-1+\frac{(p-1)q}{p-q}$ is strictly 
smaller than $-1$.

The proof now follows by \eqref{eq:p1}, \eqref{eq:p2} and \eqref{eq:p3}.
$$
 ||f|L_{1,q}(\R^n)||\le \left(\sum_{j=0}^\infty ||f_j|L_{1,q}(\R^n)||^q\right)^{1/q}
 \le c\, \left(\sum_{j=0}^\infty 2^{-jnq}||\lambda_j|\ell_p||^q\right)^{1/q}
 \le c\, ||f|B^{\sigma_p}_{p,q}(\R^n)||.
$$

\begin{rem}
We actually proved, that \eqref{eq:index} holds for $X=B^{\frac np-n}_{pq}(\R^n)$, $v=q$ and 
$\epsilon=\infty$. This, together with Theorem \ref{thm1.1} (iii) implies immediately Theorem \ref{thm1.2} (ii).
\end{rem}

\subsection{Proof of Theorem \ref{thm1.2} (iii) and Theorem \ref{thm1.3} (iii)}
Let $0<p<1$ and $0<q\le\infty$. By the Jawerth embedding (cf. \cite{J} or \cite{V})
and Theorem \ref{thm1.2} (ii) we get for any $0<p<\tilde p<1$
$$
 F^{\sigma_p}_{p,q}(\R^n)\hookrightarrow B^{\sigma_{\tilde p}}_{\tilde p,p}(\R^n)\hookrightarrow
 L_{1,p}(\R^n).
$$

\thebibliography{99}
\bibitem{BS}
C.~Bennett and R.~Sharpley, {\it Interpolation of operators}, Academic Press, San Diego, 1988.
\bibitem{CGO} A.~M.~Caetano, A.~Gogatishvili and B.~Opic, 
{\it Sharp embeddings of Besov spaces involving only logarithmic smoothness}, J. Appr. Theory 152 (2008), 188-214.
\bibitem{H'} D.~D.~Haroske, {\it Limiting embeddings, entropy numbers and envelopes in function spaces},
Habilitationsschrift, Friedrich-Schiller-Universit\"at Jena, Germany, 2002.
\bibitem{H} D.~D.~Haroske, {\it Envelopes and sharp embeddings of function spaces},
Chapman \& Hall / CRC, Boca Raton, 2007.
\bibitem{J} B.~Jawerth, {\it Some observations on Besov and Lizorkin-Triebel spaces},
Math. Scand. 40 (1977), 94-104.
\bibitem{P} J.~Peetre, {\it New thoughts on Besov spaces}, Duke Univ. Math. Series, Durham, 1976.
\bibitem{SR} W.~Sickel and T.~Runst, {\it Sobolev spaces of fractional order, 
Nemytskij operators, and nonlinear partial differential equations.}
de Gruyter Series in Nonlinear Analysis and Applications, 3. Walter de Gruyter \& Co., Berlin, 1996.
\bibitem{SiTr} W.~Sickel and H.~Triebel, {\it H\"older inequalities and sharp embeddings
in function spaces of $B^s_{pq}$ and $F^s_{pq}$ type}, Z. Anal. Anwendungen, 14 (1995), 105-140.
\bibitem{T-FS1} H.~Triebel, {\it Theory of function spaces}, Birkh\"auser, Basel, 1983.
\bibitem{T-FS2} H.~Triebel, {\it Theory of function spaces II}, Birkh\"auser, Basel, 1992.
\bibitem{T-SF} H.~Triebel, {\it The structure of functions}, Birkh\"auser, Basel, 2001.
\bibitem{T-FS3} H.~Triebel, {\it Theory of function spaces III}, Birkh\"auser, Basel, 2006.
\bibitem{V} J.~Vyb\'\i ral, {\it A new proof of the Jawerth-Franke embedding}, 
Rev. Mat. Complut. 21 (2008), 75-82.
\end{document}